\newcommand{\nfive}{\SN 5}
\newcommand{\ab}{L ^ {a,b}}
\newcommand{\bgaab}{\bga ^ {a,b}}
\newcommand{\sdmeet}{\textup{SD}${}_\mm$}
\theoremstyle{plain}
\newtheorem{theorem}{Theorem}
\newtheorem{lemma}[theorem]{Lemma}
\newtheorem{claim}[theorem]{Claim}
\newtheorem{corollary}[theorem]{Corollary}
\theoremstyle{definition}
\newtheorem{definition}[theorem]{Definition}
\begin{document}
\title[Infinite semidistributive lattices]{Congruence lattices of infinite semidistributive lattices}

\author[G.\ Gr\"atzer]{George Gr\"atzer}
\email[G.\ Gr\"atzer]{gratzer@mac.com}
\urladdr{http://server.maths.umanitoba.ca/homepages/gratzer/}
\address{University of Manitoba}

\author{J.\,B. Nation}
\email[J.B. Nation]{jb@math.hawaii.edu}
\urladdr{ https://math.hawaii.edu/~jb/}
\address{University of Hawaii\\
   Honolulu, HI\\
   USA}

\date{\today}

\begin{abstract} 
An FN lattice $F$ is a simple, infinite, semidistributive lattice.
Its existence was recently proved by R. Freese and J.\,B. Nation.
Let $\mathsf{B}_n$ denote the Boolean lattice with $n$ atoms.
For a lattice $K$, let $K^+$ denote $K$ with a new unit adjoined.

We prove that the finite distributive lattices: 
$\mathsf{B}_0^+, \mathsf{B}_1^+,\mathsf{B}_2^+, \dots$
can be represented as congruence lattices of infinite semidistributive lattices. 
The case $n = 0$ is the Freese-Nation result, which is utilized in the proof.

We also prove some related representation theorems.
\end{abstract}

\maketitle

\section{Introduction}\label{S:Introduction}

Not every finite distributive lattice is isomorphic to the congruence lattice 
of~a~finite semidistributive lattice:  for example, the 3-element chain is not,
see K. Adaricheva, R. Freese and J.\,B. Nation \cite{AFN22}
and J.\,B. Nation \cite{JBN23}.
Also, the 2-element chain is the only simple finite semidistributive lattice.
It turns out that congruence lattices of infinite semidistributive lattices may have 
fewer restrictions.

An FN lattice $F$ is a simple, infinite, semidistributive lattice.
Its existence was proved in R. Freese and J.\,B. Nation \cite{FN21}.
We will use that such a lattice $F$ has no zero or unit,
because if it did, it would not be simple.
(If a nontrivial join semidistributive lattice $L$
has a greatest element $1$, then an easy argument using 
join semidistributivity shows that if $I$ is an ideal of $L$
maximal with respect to not containing $1$, then $I$ and its
complement are the blocks of a congruence of $L$.)

Let $\SB n$ denote the Boolean lattice with $n$ atoms.
For a lattice $K$, let $K^+$ denote~the lattice we obtain by adding a new unit 
to $K$ and let $K_+$ denote $K$ with a~new zero adjoined.

\begin{theorem}\label{T:main}
There is an  infinite semidistributive lattice $L_i$ such that 
the congruence lattice of\/ $\SL i$ is isomorphic to  $\SB i^+$
for $n = 0,1, 2,\dots$.  See \emph{Figure~\ref{F:sequence1}.}
\end{theorem}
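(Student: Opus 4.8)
The plan is to define, for $n \ge 1$, the lattice $L_n = (F^n)^+$, where $F^n$ is the $n$-th direct power of an FN lattice $F$ and $K^+$ denotes $K$ with a new greatest element $\top$ adjoined; the case $n=0$ is handled by $L_0 = F$ itself, as noted in the introduction. First I would record the two structural facts that make $L_n$ admissible. Semidistributivity is expressed by the quasi-identities $\mathrm{SD}_\vee$ and $\mathrm{SD}_\wedge$, so it is inherited by the power $F^n$. One then checks directly that adjoining a top preserves both laws: in $K^+$ a join equals $\top$ only when one of its arguments is $\top$, and meets with $\top$ are transparent, so every instance of $\mathrm{SD}_\vee$ or $\mathrm{SD}_\wedge$ in $K^+$ either lives inside $K$ or is satisfied trivially. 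Since $F$ is infinite, $L_n$ is an infinite semidistributive lattice.

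Next I would compute $\operatorname{Con}(F^n)$. Because lattices form a congruence-distributive variety, the Fraser--Horn property applies to the finite power $F^n$: every congruence is a direct product of congruences of the factors. As $F$ is simple, $\operatorname{Con}(F) = \{\mathbf 0, \mathbf 1\}$, whence $\operatorname{Con}(F^n) \cong \mathbf 2^n = \SB n$, with atoms $\gamma_1, \dots, \gamma_n$, where $\gamma_i$ is the congruence identifying tuples that differ only in the $i$-th coordinate.

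The heart of the argument is to see how adjoining $\top$ changes the congruence lattice; write $K = F^n$. A congruence $\theta$ of $K^+$ for which $\{\top\}$ is a singleton block is determined by its restriction $\theta \cap (K \times K)$, and every congruence of $K$ extends in exactly this one way. These $\theta$ form a sublattice of $\operatorname{Con}(K^+)$ isomorphic to $\operatorname{Con}(K) = \SB n$; they are precisely the congruences below the congruence $\beta = \gamma_1 \vee \dots \vee \gamma_n$, which collapses $K$ to a single point while keeping $\top$ apart. I would then prove the key claim: any congruence $\theta$ with $\top \mathrel{\theta} a$ for some $a \in K$ is the all-relation $\mathbf 1$. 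This is the step I expect to be the main obstacle, and it is exactly where the two defining features of $F$ are used: simplicity and the absence of a greatest element.

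For the claim, from $\top \mathrel{\theta} a$ together with $\top \wedge b = b$ one obtains $b \mathrel{\theta} a \wedge b$ for every $b \in K$. Decomposing $\theta \cap (K \times K)$ into its coordinate congruences $\theta_1, \dots, \theta_n$ by Fraser--Horn, fix $i$ and choose $b$ equal to $a$ off the $i$-th coordinate and equal to some $d$ in the $i$-th coordinate with $d \not\le a_i$; such $d$ exists precisely because $F$ has no greatest element. Then the pair $(b, a \wedge b)$ differs only in coordinate $i$, where it is $(d, a_i \wedge d)$ with $a_i \wedge d \ne d$, so $\theta_i$ is nontrivial and hence, by simplicity of $F$, equal to $\mathbf 1$. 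As $i$ was arbitrary, $\theta$ collapses all of $K$, and with $\top \mathrel{\theta} a$ this forces $\theta = \mathbf 1$. Consequently the congruences of $L_n$ are exactly the copy of $\SB n$ consisting of the $\top$-singleton congruences, together with one further congruence $\mathbf 1$ covering their top $\beta$; therefore $\operatorname{Con}(L_n) \cong \SB n^+$, which completes the proof.
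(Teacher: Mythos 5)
Your proof is correct, but it follows a genuinely different route from the paper's. The paper does not form a direct power: it picks an $n$-element antichain $U$ in the FN lattice $F$ and doubles each of its elements, setting $\SL n = F[U]$. Doubling preserves semidistributivity (Lemma~\ref{L:lu}), each doubled pair yields an atom $\bgm_u = \con{u_0,u_1}$ of $\Con{F[U]}$ whose only nontrivial block is $\set{u_0,u_1}$, the joins $\bgm[V]$ for $V \ci U$ form a copy of $\SB n$, and $F[U]/\bgm[U] \iso F$ is simple, which pins down the rest of the congruence lattice. Your construction $(F^n)^+$ instead runs on off-the-shelf ingredients: semidistributivity is a pair of Horn sentences, hence passes to finite powers, and you check directly that it survives adjoining a unit; the Fraser--Horn property of the congruence-distributive variety of lattices gives $\Con{(F^n)} \iso \SB n$; and your key claim---that any congruence collapsing $\top$ with an element of $F^n$ is the all-relation---is proved correctly, using exactly the two properties of $F$ that the paper also relies on (simplicity, and the absence of a greatest element, which supplies the needed $d \not\le a_i$). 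The trade-off is this: the paper's lattices $F[U]$ have no bounds at all, a stronger statement (its Claim~\ref{C:main} and Corollary~\ref{C:strong}) that is then used to obtain Corollaries~\ref{C:main2} and~\ref{C:main3} by adjoining new bounds afterwards; your $L_n$ has a unit, so while it proves Theorem~\ref{T:main} as stated, it cannot feed into that machinery---for instance, adjoining a further unit to $(F^n)^+$ produces a congruence lattice isomorphic to $\SB n^+ \times \SB 1$ rather than $\SB n^{++}$. Conversely, your route avoids the doubling lemmas entirely and makes the congruence computation fully explicit, whereas the paper's closing step (that $\Con{F[U]}$ consists of $C$ and $\unit_{F[U]}$) is left rather terse.
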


For $n = 0$, this gives the two-element chain.
Note that for $n = 1$, Theorem~\ref{T:main} claims
that there is an infinite semidistributive lattice
with the three-element as the congruence lattice.

\begin{corollary}\label{C:main2}
There is an  infinite semidistributive lattice $L_i$ such that 
the congruence lattice of\/ $\SL i$ is isomorphic to  $\SB i^{++}$
for $i = 0,1, 2, \dots$.  See \emph{Figure~\ref{F:sequence2}.}
\end{corollary}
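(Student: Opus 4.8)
The plan is to deduce Corollary~\ref{C:main2} from Theorem~\ref{T:main} by a single general step that adjoins a new unit to the congruence lattice. Concretely, I would isolate the following Lemma: \emph{if $L$ is an infinite semidistributive lattice whose congruence lattice $\operatorname{Con} L$ is finite and distributive, and whose top congruence $\nabla_L$ is join-irreducible, then there is an infinite semidistributive lattice $\widehat L$ with $\operatorname{Con}\widehat L \cong (\operatorname{Con} L)^+$.} Applying the Lemma to $L = L_i$ from Theorem~\ref{T:main} then finishes the proof: since $\operatorname{Con} L_i \cong \SB i^+$, whose top is join-irreducible (it is the new top, covering the top of $\SB i$), the Lemma produces $\widehat{L_i}$ with $\operatorname{Con}\widehat{L_i}\cong(\SB i^+)^+=\SB i^{++}$, and infinity and semidistributivity are retained.

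First I would record the structural fact that makes the hypothesis available. The join-irreducibles of $\SB i^+$ are the $i$ atoms of $\SB i$ together with a single new maximum $1^\ast$; hence $\nabla_{L_i}$ is join-irreducible and covers a unique coatom $\theta_0$, with $[\Delta,\theta_0]\cong\SB i$ and $L_i/\theta_0$ simple. Thus $\nabla_{L_i}=1^\ast$ is generated by a single prime quotient $p^\ast$ lying in this simple top quotient. Passing from $\SB i^+$ to $\SB i^{++}$ amounts precisely to replacing the top covering edge $\theta_0\prec 1^\ast$ of $\operatorname{Con}L_i$ by a two-step chain $\theta_0\prec 1^\ast\prec\nabla$, that is, to turning the single join-irreducible top into a chain of two join-irreducibles while leaving $[\Delta,\theta_0]\cong\SB i$ untouched.

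For the construction I would glue a fresh copy of the Freese--Nation lattice $F$ onto $L_i$ along the simple top quotient (a Hall--Dilworth style gluing over the filter generating $p^\ast$), so that $p^\ast$ survives but its principal congruence is no longer full. In $\widehat L$ the image of the old $\nabla_{L_i}$ becomes a coatom, which I again denote $1^\ast$, while the simplicity of the adjoined $F$ supplies a single new prime quotient whose principal congruence is projective to every quotient of $\widehat L$ and is therefore the new unit $\nabla_{\widehat L}$, sitting directly above $1^\ast$. The congruence count then reads as follows: a congruence of $\widehat L$ restricts to a congruence of $L_i$ and to one of $F$ (the latter $\Delta$ or $\nabla$ by simplicity), the gluing forcing the collapse of the $F$-part to lie \emph{above} the image of $\nabla_{L_i}$ rather than to split off independently. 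This yields $[\Delta,1^\ast]\cong\SB i^+$ and $[1^\ast,\nabla]\cong\ctwo$, whence $\operatorname{Con}\widehat L\cong(\SB i^+)^+=\SB i^{++}$.

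The hard part will be exactly this last point: ensuring that the adjoined simple lattice contributes \emph{one} new top congruence rather than a direct factor. A naive linear sum or product of $L_i$ with a simple lattice produces $\SB i^+\times\ctwo$, not $(\SB i^+)^+$, because the toggle ``collapse $F$ or not'' becomes an independent Boolean coordinate. The remedy, which is the crux of the verification, is to arrange the overlap of the gluing so that collapsing the new prime quotient is \emph{forced}, through projectivities in the simple lattice $F$ and through the simple top quotient $L_i/\theta_0$, to collapse all of $L_i$ as well; this makes the new principal congruence equal to $\nabla$ and places it immediately above $1^\ast$, with no spurious incomparable or low congruences created. Checking that these projectivities propagate as required, and that semidistributivity is preserved across the gluing, is where the real work lies; both should be controlled by the same features of $F$ already exploited in the proof of Theorem~\ref{T:main}.
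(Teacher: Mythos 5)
Your reduction target $\Con{\widehat L}\iso(\SB i^+)^+$ is the right one, and you correctly diagnose why an ordinal sum or direct product fails, but the gluing you propose cannot deliver the missing level; in fact it destroys exactly the level you are trying to insert. In a Hall--Dilworth gluing of $F$ on top of $L_i$, the overlap $G$ is a nonempty filter of $L_i$ identified with an ideal of $F$ (already a nontrivial existence question, which you do not address; note that $G$ is necessarily infinite, since $L_i$ has no unit by Claim~\ref{C:main} and $F$ has no zero). Now let $\theta$ be any congruence of $\widehat L$ collapsing all of $L_i$. Then $\theta$ collapses $G$, so its restriction to $F$ is a nontrivial congruence of $F$, hence equals $\unit_F$ by simplicity, and therefore $\theta=\unit_{\widehat L}$. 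So the congruence of $\widehat L$ generated by the old $\unit_{L_i}$ is the top, \emph{not} the coatom $1^*$ your argument posits: when a simple lattice is glued along a nontrivial common sublattice, ``collapse $L_i$ but not $F$'' is never a congruence, whereas $\SB i^{++}$ requires precisely such a congruence. The full count confirms you come out one level short: the congruences of a Hall--Dilworth gluing are exactly the pairs $(\alpha,\beta)\in\Con{L_i}\times\Con{F}$ that agree on $G$; since every proper congruence $\bgm[V]$ of $L_i$ has all classes of size at most $2$, no proper $\alpha$ collapses the infinite filter $G$, so the compatible pairs are the $(\bgm[V],\zero_F)$ with $\bgm[V]$ trivial on $G$, together with $(\unit_{L_i},\unit_F)$. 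Hence $\Con{\widehat L}\iso \SB j^+$ for some $j\le i$, never $\SB i^{++}$, no matter how the overlap is chosen.

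The paper's proof is the opposite of a gluing, and it is essentially one line: apply Lemma~\ref{L:Construction1} to the lattice $L_i$ of Claim~\ref{C:main}. Since $L_i$ has no largest element, adjoining a single new unit $1$ yields a congruence with blocks $L_i$ and $\set{1}$ --- this is the intermediate level your construction cannot produce, and it exists precisely because the new element shares no sublattice with $L_i$ --- while any congruence relating $1$ to some $a\in L_i$ collapses the infinite filter $\fil{a}$, hence restricts to $\unit_{L_i}$ and equals $\unit_{L_i^+}$. Thus $\Con{L_i^+}\iso(\SB i^+)^+=\SB i^{++}$, and $L_i^+$ is clearly infinite and semidistributive. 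In your own terms: the ``independent toggle'' disappears exactly when the attached piece has trivial congruence lattice, i.e., is a single point rather than a copy of $F$. If you want to salvage your general lemma, the correct hypothesis is not join-irreducibility of the top congruence but rather ``$L$ has no largest element and no proper congruence of $L$ collapses a nonempty filter,'' and the construction proving it is adjunction of a unit, not a gluing.
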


\begin{corollary}\label{C:main3}
There is an  infinite semidistributive lattice $L_i$ such that 
the congruence lattice of\/ $\SL i$ is isomorphic to  $\SB i \dotplus \SB 2$
for $i = 0,1, 2, \dots$.  See \emph{Figure~\ref{F:sequence3}.}
\end{corollary}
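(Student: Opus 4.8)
The plan is to reuse the construction behind Theorem~\ref{T:main}, but to feed it the lattice $F \times F$ in place of a single FN lattice $F$. The construction proving Theorem~\ref{T:main} takes an infinite semidistributive lattice and attaches $n$ mutually independent gadgets near the bottom, each contributing one atom to the congruence lattice, in such a way that collapsing all of them returns the original lattice as the quotient $L_n/\mu$ over a single ``neck'' congruence $\mu$. Because $F$ is simple, that quotient contributes only the two-element interval $[\mu,\nabla] \cong \operatorname{Con}(F) \cong \SB 0^+$ at the top, and since the $n$ new atoms all lie below $\mu$, the outcome is the glued sum $\operatorname{Con}(L_n) \cong \SB n \dotplus \SB 0^+ \cong \SB n^+$. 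The only property of $F$ that is genuinely used beyond semidistributivity is that it is the quotient sitting above $\mu$; its simplicity is precisely what forces the top summand to be $\SB 0^+$.

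First I would replace $F$ by $G = F \times F$. This lattice is infinite, and it is semidistributive because a direct product of semidistributive lattices is again semidistributive. Moreover, since lattices form a congruence-distributive variety, there are no skew congruences on a finite product, so $\operatorname{Con}(G) \cong \operatorname{Con}(F) \times \operatorname{Con}(F) \cong \SB 0^+ \times \SB 0^+ \cong \SB 2$. Running the construction of Theorem~\ref{T:main} over $G$ then produces an infinite semidistributive lattice $L_n$ carrying a congruence $\mu$ with $L_n/\mu \cong G$, so that $[\mu,\nabla] \cong \operatorname{Con}(G) \cong \SB 2$, while the $n$ bottom gadgets again generate a copy of $\SB n$ inside $[\Delta,\mu]$.

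It then remains to identify $\operatorname{Con}(L_n)$ with the glued sum $\SB n \dotplus \SB 2$. The decisive structural point is that $\mu$ is a neck, i.e.\ every congruence of $L_n$ is comparable to $\mu$. Granting this, the congruences below $\mu$ form $\SB n$, the congruences above $\mu$ form $\SB 2$, and, since each bottom atom $e_k \le \mu$ while each of the two top join-irreducibles is $\ge \mu$, a meet of a lower and an upper congruence lands below $\mu$ and a join lands above it; hence $\operatorname{Con}(L_n) \cong \SB n \dotplus \SB 2$, as required. For $n = 0$ no gadgets are attached and $L_0 = F \times F$, recovering $\operatorname{Con}(L_0) \cong \SB 2 \cong \SB 0 \dotplus \SB 2$.

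The main obstacle is exactly the verification that $\mu$ remains a neck once the simple lattice $F$ is replaced by the non-simple lattice $F \times F$. In Theorem~\ref{T:main} simplicity of $F$ may be invoked to ensure that nothing sits strictly between $\mu$ and $\nabla$ beyond the single covering step, whereas here $[\mu,\nabla]$ has the richer shape $\SB 2$; I must confirm that the two independent congruences arising from the two $F$-factors lie entirely above $\mu$ and that the $n$ bottom atoms lie entirely below $\mu$, so that no ``diagonal'' congruence appears. Were such a diagonal congruence to arise, the bottom atoms would become incomparable to the top join-irreducibles and $\operatorname{Con}(L_n)$ would collapse to the direct product $\SB{n+2}$ rather than the glued sum $\SB n \dotplus \SB 2$. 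The heart of the argument is therefore to show that the bottom gadgets are congruence-theoretically independent of the two product factors of $F \times F$, which I expect to follow from the locality of the gadgets together with the independence already established in the proof of Theorem~\ref{T:main}.
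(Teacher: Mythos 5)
You have identified the right crux, but you have not closed it, and the appeal to Theorem~\ref{T:main} cannot close it. The independence argument behind Theorem~\ref{T:main} leans on simplicity of $F$: if a congruence $\theta$ of $F[U]$ is not below $\mu[U]$, its image in $F[U]/\mu[U] \iso F$ is a nontrivial congruence of a \emph{simple} lattice, so $\theta \vee \mu[U] = \nabla$ and there is nothing in between. For $G = F \times F$ this dichotomy is exactly what fails: a congruence $\theta \nleq \mu[U]$ may satisfy $\theta \vee \mu[U] = \hat\eta_1$ or $\hat\eta_2$, the preimages in $G[U]$ of the two projection kernels, and you must rule out such $\theta$ that do not contain $\mu[U]$. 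This is a concrete, local question: for $u = (a,b) \in U$, such a $\theta$ (after factoring out $\theta \wedge \mu[U]$) restricts to a congruence with exactly two blocks, separating $u_0$ from $u_1$, on the column sublattice $\{a\} \times F$ doubled at $u$, which is a copy of $F[b]$. A two-block congruence of $F[b]$ separating $b_0$ from $b_1$ is a partition into an ideal and a filter, and one checks it exists \emph{exactly when every element of $F$ is comparable to $b$}. So your proof needs a lemma that no coordinate of any $u \in U$ is comparable to all of $F$ (true for the FN lattice, e.g.\ choosing coordinates in the infinite antichain $\{b_0, b_1, \dots\}$, but invoked nowhere in your proposal), and it still needs a separate argument for the case $\theta \vee \mu[U] = \nabla$, $\theta \wedge \mu[U] = \Delta$, where $\theta$ would have finitely many classes; there one can argue that, by simplicity of $F$, every row and column sublattice $\iso F$ would have to lie in a single class, which forces $F$ to be finite, a contradiction. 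None of this is "locality plus Theorem~\ref{T:main}"; it is the heart of the proof, and it is precisely where the non-simplicity of $F \times F$ costs you.

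Separately, be aware that you are not building the paper's lattice, and the two constructions give \emph{non-isomorphic} congruence lattices. The paper sets $L_i = (F[U])^+_+$: a new zero and unit adjoined to the lattice of Theorem~\ref{T:main}. There the top of the copy of $\SB i$ (namely $\mu[U]$) lies \emph{strictly below} the bottom of the copy of $\SB 2$ (the congruence collapsing all of $F[U]$), so $\Con L_i$ has $2^i + 4$ elements; already for $i=0$, the paper's $L_0 = F^+_+$ has the five-element congruence lattice $(\SB 2)_+$, the lattice of Corollary~\ref{C:main2-1}. In your $(F\times F)[U]$, the congruence $\mu[U]$ is simultaneously the top of $\SB n$ and the bottom of $[\mu[U], \nabla] \iso \SB 2$, giving $2^n + 3$ elements; for $n = 0$ you get the four-element $\SB 2$. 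If $\dotplus$ is the glued sum of \cite{CFL3}, your target is the literal statement and the paper's own construction overshoots by one "neck" element; if the paper (and Figure~\ref{F:sequence3}) intends the ordinal sum with a covering gap, then your construction, even with the gap above repaired, proves a different corollary. You must settle which lattice is meant before your argument can count as a proof of this statement.
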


\begin{definition}\label{D:iso}
In a lattice $L$, an interval $[a, b]$ is \emph{isolated}, if the following three conditions hold:
\begin{enumeratei}
\item $a \prec b$;
\item $a$ is doubly-irreducible;
\item $b$ is doubly-irreducible.
\end{enumeratei}
\end{definition}

For $|L| > 2$, let $\ab$ denote the sublattice of $L$
we obtain by deleting the elements $a$ and $b$.
For $|L| = 2$, let $\ab = \es$.


\begin{theorem}\label{T:main2}
Let $L$ be a finite semidistributive lattice that has
an isolated interval $[a,b]$.
Then $D = \Con L$  can be represented
as the congruence lattice of an \emph{infinite} semidistributive lattice $K$.
\end{theorem}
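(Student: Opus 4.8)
The plan is to obtain $K$ by splicing a copy of an FN lattice $F$ into $L$ in place of the isolated interval $[a,b]$. Since $a\prec b$ with both endpoints doubly irreducible, $a$ has a unique lower cover $a^-$, $b$ a unique upper cover $b^+$, and $a,b$ are the only elements of $L$ in the open interval $(a^-,b^+)$; moreover the irreducibility of $a,b$ is exactly what makes $\ab$ a sublattice. I would build $K$ on the set $\ab\cup F$, extending the orders of the sublattice $\ab$ and of $F$ so that every element of $F$ lies strictly above $a^-$ (and everything below it) and strictly below $b^+$ (and everything above it), thus filling the two-element gap $[a^-,b^+]$ of $\ab$. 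The first, routine, task is to check that $K$ is a lattice: the join or meet of a pair having one member in $F$ and one in $\ab$ is forced through the bottleneck at $a^-$ or $b^+$ and reduces to a computation already available in $\ab$ or in $F$. The delicate point, addressed below, is how the open bottom and open top of $F$ attach at $a^-$ and $b^+$.

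Next I would verify that $K$ is semidistributive. Both $\ab$ (a sublattice of the finite semidistributive lattice $L$) and $F$ are semidistributive, and because every join and meet across the spliced copy factors through $a^-$ or $b^+$, any triple witnessing a failure of $\mathrm{SD}_\vee$ or $\mathrm{SD}_\wedge$ can be pushed into $\ab$ or into $F$, where none exists. The heart of the argument is then the isomorphism $\Con K\cong\Con L$. As the congruence lattice of any lattice is distributive, it suffices to match the posets of join-irreducible congruences. Each is generated by a prime quotient: the prime quotients lying inside $\ab$ correspond bijectively to those of $L$ other than $[a,b]$, while every prime quotient inside the spliced $F$ generates, by simplicity of $F$, one and the same congruence $\gamma$ of $K$, the full collapse of $F$. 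The intended isomorphism sends $\operatorname{con}_L(a,b)\mapsto\gamma$ and fixes the remaining generators, and one must check that the forcing (weak-perspectivity) relation is preserved in both directions; here the isolation of $[a,b]$ guarantees that $\operatorname{con}_L(a,b)$ collapses nothing but the pair $a,b$, so that $\gamma$ should force on $\ab$ exactly the same quotients, namely none beyond those forced through $a^-$ and $b^+$.

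The step I expect to be the main obstacle is the control of congruences at the two junctions. A careless splice threatens to create additional join-irreducible congruences with no counterpart in $\Con L$—for instance one collapsing $F$ together with $a^-$, or merging a cover of $a^-$ into a collapsed $F$—which would make $\Con K$ strictly larger than $\Con L$. The technical heart of the theorem is therefore to arrange the attachment of the open ends of $F$ so that the only nontrivial congruence whose support meets $F$ is the full collapse $\gamma$, and that $\gamma$ reproduces on $\ab$ precisely the partition induced by $\operatorname{con}_L(a,b)$. I expect this to be where the three ingredients must be combined: simplicity of $F$ forbids any intermediate collapse inside $F$; double-irreducibility of $a,b$ (so that $[a,b]$ is weakly perspective to no other prime quotient) pins down which external quotients $\gamma$ may force; and semidistributivity constrains the joins and meets at $a^-,b^+$ that could otherwise support a spurious junction congruence. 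Once it is shown that no such junction congruence survives and that the two forcing relations agree, the posets of join-irreducible congruences are isomorphic, whence $\Con K\cong\Con L=D$ with $K$ infinite and semidistributive, as required.
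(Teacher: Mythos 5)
Your $K$ is exactly the paper's construction (Definition~\ref{D:bin}): the FN lattice $F$ spliced into $\ab$ above $\id{a_*}$ and below $\fil{b^*}$, and your lattice and semidistributivity steps are the paper's Lemmas~\ref{L:lattice} and~\ref{L:semi}. (One aside is false but harmless: isolation does not make $[a_*,b^*]$ a two-element gap --- in $\nfive$ the open interval $(a_*,b^*)$ also contains the third atom $x$; still, the order you define is the right one, leaving such elements incomparable to $F$.) The genuine gap is in your congruence bookkeeping. The prime quotients of $L$ ``other than $[a,b]$'' do \emph{not} biject with the prime quotients inside $\ab$: the quotients $a_*\prec a$ and $b\prec b^*$ are neither equal to $[a,b]$ nor inside $\ab$, and they have no counterpart in $K$ at all, because $F$ has no minimal or maximal elements (in a lattice, minimal implies least), so no element of $K$ covers $a_*$ or is covered by $b^*$ on the $F$ side. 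Consequently the join-irreducible congruences $\con{a_*,a}$ and $\con{b,b^*}$ of $L$ can simply disappear in $\Con K$.

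And they do disappear, so the splice does not prove the theorem as stated. Let $L$ be the $4$-element chain $0\prec a\prec b\prec 1$, which the paper itself cites as having an isolated interval; then $\Con L\iso\SB 3$, whose three join-irreducible congruences $\con{0,a}$, $\con{a,b}$, $\con{b,1}$ form an antichain. Here $K=\set{0}\uu F\uu\set{1}$ has exactly five congruences: equality, the collapse of $F$, of $F\uu\set{0}$, of $F\uu\set{1}$, and of all of $K$ --- any congruence identifying a pair that meets $F$ must collapse all of $F$ (since $F$ is simple and has no bounds), and a congruence trivial on $F$ is trivial. So $\Con K\iso(\SB 2)_+\not\iso\SB 3$, and no forcing-poset matching can exist; note that the paper's own Lemma~\ref{L:cong}, stated without proof, fails on the same example for the same reason. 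Your other junction claim --- that $\gamma$ should be the \emph{only} nontrivial congruence whose support meets $F$ --- is also false and is not what is needed: in the $\nfive$-based $K$ the partition with blocks $\set{0}\uu F$ and $\set{x,1}$ is a perfectly good congruence (it is the image of the $\nfive$-congruence with blocks $\set{0,a,b}$, $\set{x,1}$); the true statement is that $\gamma$ is the \emph{minimum} congruence whose support meets $F$. What the splice actually yields is the sublattice of those $\bga\in\Con L$ that contain $\con{a,b}$ whenever they contain $\con{a_*,a}$ or $\con{b,b^*}$; your argument (and the paper's) goes through only under the extra hypothesis $\con{a,b}\leq\con{a_*,a}\mm\con{b,b^*}$, which holds in $\nfive$ but is not a consequence of isolation. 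Cases like the chain need a different lattice altogether, e.g.\ $F\times\SB 2$, whose congruence lattice is $\Con F\times\Con{\SB 2}\iso\SB 3$.
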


The isolated interval $[a, b]$ in $\nfive$ yields the following.

\begin{corollary}\label{C:main2-1}
There is an infinite semidistributive lattice $L$ such that 
the congruence lattice of $L$ is isomorphic to  $(\SB 2)_+$.
\end{corollary}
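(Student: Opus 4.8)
The plan is to apply Theorem~\ref{T:main2} directly to the pentagon $\nfive$. So the first task is to verify that $\nfive$ satisfies the hypotheses of that theorem: it is a finite semidistributive lattice (the pentagon is indeed both meet- and join-semidistributive), and it possesses an isolated interval. Labeling the pentagon with bottom $0$, top $1$, the two-element chain $a \prec b$ on one side, and the singleton middle element $c$ on the other side (so that $0 \prec a \prec b \prec 1$ on the long side and $0 \prec c \prec 1$ on the short side), the interval $[a,b]$ is a covering pair in which both $a$ and $b$ are doubly-irreducible: $a$ has unique lower cover $0$ and unique upper cover $b$, while $b$ has unique lower cover $a$ and unique upper cover $1$. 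Thus $[a,b]$ meets all three conditions of Definition~\ref{D:iso}.

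Next I would compute $\Con \nfive$ to identify the lattice $D$ being represented. The pentagon has exactly three nontrivial congruences besides the trivial ones: collapsing the covers on each side yields the congruences generated by $0 \equiv c$, by $c \equiv 1$, and by $a \equiv b$, but the join-dependencies among these must be examined. A direct check shows that $\Con \nfive \cong (\SB 2)_+$: the three prime quotients of $\nfive$ correspond to the three covering relations $0/c$, $c/1$, and one combined quotient from the long chain, and the resulting lattice of congruences is the Boolean square $\SB 2$ with a new zero element (the trivial congruence $\Delta$) adjoined below. This is where I would be careful — the exact poset of join-irreducible congruences of $\nfive$ determines whether we truly obtain $(\SB 2)_+$ rather than some other small distributive lattice, so I would draw the congruence lattice explicitly and match atoms and coatoms.

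With $\nfive$ confirmed to be finite semidistributive with isolated interval $[a,b]$ and $\Con \nfive \cong (\SB 2)_+$, Theorem~\ref{T:main2} immediately produces an infinite semidistributive lattice $K$ with $\Con K \cong \Con \nfive \cong (\SB 2)_+$. Setting $L = K$ gives exactly the statement of the corollary. The main obstacle, such as it is, is not in the construction — that work is entirely absorbed by Theorem~\ref{T:main2} — but rather in the bookkeeping of the two verifications: confirming that the chosen interval is genuinely isolated (both endpoints doubly-irreducible and covering), and correctly computing $\Con \nfive$ to be $(\SB 2)_+$. Since $\nfive$ is a five-element lattice, both checks are finite and elementary, so the corollary follows cleanly once the hypotheses of Theorem~\ref{T:main2} are matched to the pentagon.
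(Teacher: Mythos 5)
Your proposal is correct and follows exactly the paper's route: the paper obtains this corollary by applying Theorem~\ref{T:main2} to the pentagon $\nfive$ with its isolated interval $[a,b]$, which is precisely your argument. The two verifications you spell out---that $[a,b]$ is isolated in $\nfive$ (both endpoints doubly-irreducible) and that $\Con \nfive \iso (\SB 2)_+$ (the atom $\con{a,b}$ lying below the two coatoms $\con{0,c}$ and $\con{c,1}$, giving $\SB 2$ with a new zero adjoined)---are exactly the finite checks the paper leaves to the reader.
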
 

The 4-element chain contains an isolated interval. Other semidistributive lattices with an isolated interval include
$\SL 6 = \SN 6$, $\SL 9$, and $\SL {10}$ from the list of lattices
that generate varieties covering the variety of the pentagon,
see, for instance,  P. Jipsen and H. Rose \cite{JR2016}.

In Section~\ref{S:Representing}, we also prove some related representation theorems.
%
%
%

\subsection*{Basic concepts and notation.}

The basic concepts and notation not defined in this note 
are in Part~I of the book \cite{CFL3} (freely available for download). 
In particular, for a join-irreducible element $a$ in a finite lattice, 
let $a_*$ denote the unique lower cover of $a$; we define $a^*$ dually.

\begin{figure}[p!]
\centerline{\includegraphics{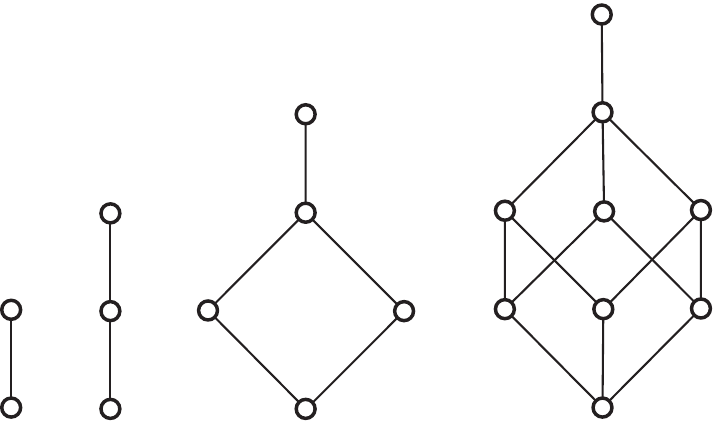}}
\caption{Finite distributive lattices represented in Theorem~\ref{T:main}}
\label{F:sequence1}

\bigskip

\bigskip

\centerline{\includegraphics{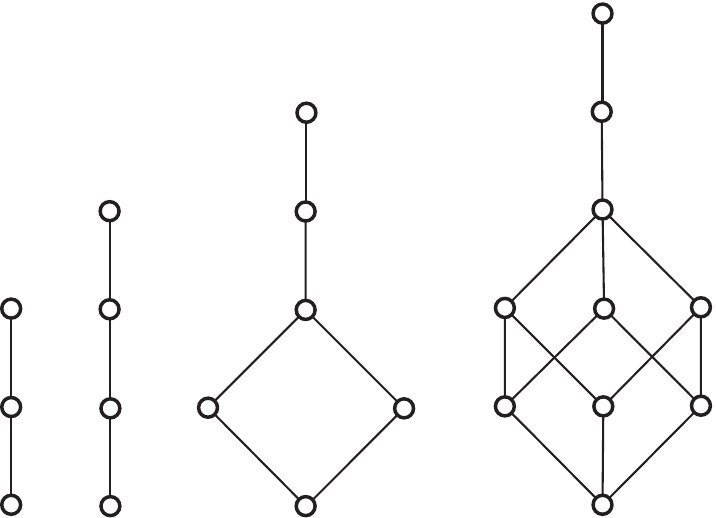}}
\caption{The finite distributive lattices represented in Corollary~\ref{C:main2}}
\label{F:sequence2}

\bigskip

\bigskip

\centerline{\includegraphics{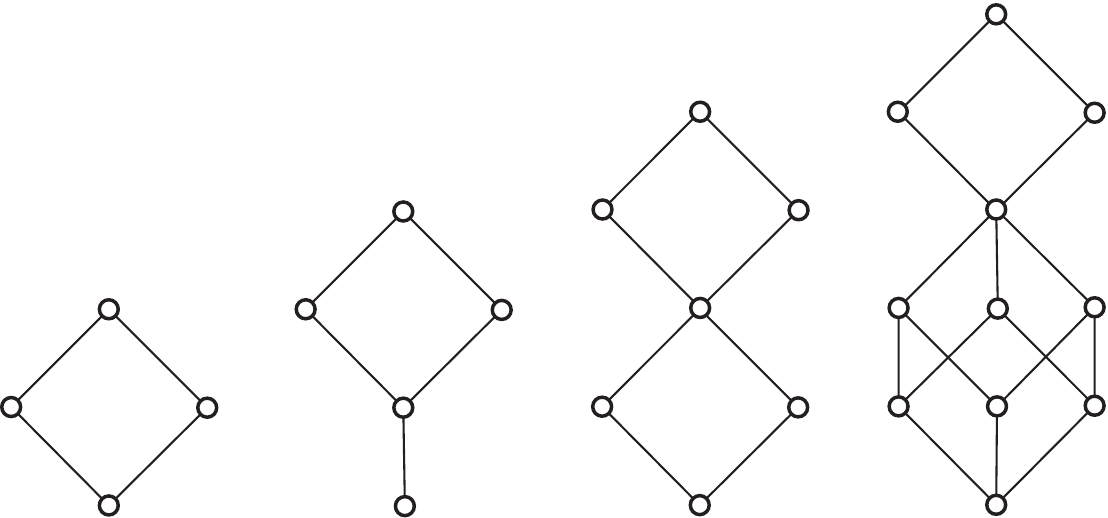}}
\caption{The finite distributive lattices represented in Corollary~\ref{C:main3}}
\label{F:sequence3}
\end{figure}

\section{Semidistributivity}\label{S:Semidistributivity}

A lattice $L$ is \emph{meet-semidistributive}, if the following implication holds:
\begin{equation}
    w = x \mm y  = x \mm z \text{ implies that } w = x \mm (y  \jj z) \text{ for $x,y,z,w \in L$}. 
\tag{\sdmeet}
\end{equation}
 
The following statement is well-known.

\begin{lemma}\label{L:prime}
Let $L$ be a meet-semidistributive $L$ with $u < v \in L$.
Then the set 
\[
   I_{u,v} = \setm{x \in L}{v \mm x = u}
\]
is an ideal in $\fil{u}$. 
Conversely, if $I_{u,v}$ is an ideal in $\fil{u}$ whenever $u < v$, then $L$ is meet-semidistributive.
\end{lemma}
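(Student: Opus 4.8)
The plan is to prove both implications directly from the definitions, since the only substantive ingredient is the law $\sdmeet$ itself; everything else is routine lattice arithmetic. First I would record the containment $I_{u,v} \subseteq \fil{u}$: if $v \mm x = u$, then $u = v \mm x \le x$, so $x \ge u$. For the forward direction I then verify the three ideal axioms relative to the filter $\fil{u}$. Nonemptiness is immediate, since $u \mm v = u$ (because $u < v$) gives $u \in I_{u,v}$. For downward closure, I would take $x \in I_{u,v}$ and any $y$ with $u \le y \le x$; then $v \mm y \le v \mm x = u$, while $v \mm y \ge u$ because both $v \ge u$ and $y \ge u$, so $v \mm y = u$ and $y \in I_{u,v}$. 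Neither of these two steps uses semidistributivity.

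The crux of the forward direction, and the only place where the hypothesis is needed, is closure under join. Given $x, y \in I_{u,v}$, we have $v \mm x = u = v \mm y$, and applying $\sdmeet$ with $v$ in the role of the common meetand yields exactly $v \mm (x \jj y) = u$, that is, $x \jj y \in I_{u,v}$. For the converse I would instead assume every $I_{u,v}$ (for $u < v$) is an ideal and re-derive $\sdmeet$. Suppose $w = x \mm y = x \mm z$. If $w = x$, then $x \le y$ and $x \le z$, hence $x \le y \jj z$ and $x \mm (y \jj z) = x = w$, so the conclusion holds trivially. Otherwise $w < x$, and I set $u = w$, $v = x$; then $v \mm y = w = u$ and $v \mm z = u$, so $y, z \in I_{u,v}$. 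Since this set is closed under joins, $y \jj z \in I_{u,v}$, which is precisely the desired $x \mm (y \jj z) = w$.

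The main obstacle here is purely organizational rather than mathematical: one must recognize that closure of $I_{u,v}$ under join is nothing but a verbatim restatement of $\sdmeet$, and that the remaining ideal axioms, together with the degenerate case $w = x$ in the converse, require no hypothesis at all. The one point demanding a little care is remembering to handle that degenerate case separately in the converse, since the construction $u = w < v = x$ presupposes $w < x$; without it the passage to $I_{u,v}$ is not available.
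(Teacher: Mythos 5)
Your proof is correct. The paper gives no proof of this lemma at all (it is simply stated as well-known), and your argument supplies the standard verification that is being taken for granted: join-closure of $I_{u,v}$ is a verbatim restatement of \sdmeet{}, the containment $I_{u,v} \ci \fil{u}$, nonemptiness, and downward closure are hypothesis-free, and your separate treatment of the degenerate case $w = x$ in the converse is exactly the care required, since $I_{u,v}$ is only defined and only assumed to be an ideal when $u < v$.
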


Dually, $L$ is \emph{join-semidistributive}, if the following implication holds:
\begin{equation}
  u = x \jj y  = x \jj z \text{  implies that  }
   u = x \jj (y  \mm z) \text{ for $x,y,z,u \in L$}.\tag{\text{SD}${}_\jj$}
\end{equation}
A lattice $L$ is \emph{semidistributive} 
if it is both meet-semidistributive and join-semi\-distributive.

\section{Doubling an element}\label{S:Doubling}

In this section we review some familiar properties of the doubling construction;
see \cite{gG74} and \cite{Day94}.

Let $L$ be a lattice and let $u \in L$.
Define 
\begin{equation}\label{E:lr}
L[u] = (L - \set{u}) \uu \set{u_0, u_1} \end{equation} (disjoint union).

We order the set $L[u]$ by the relation $\leq_u$ as follows:
\begin{definition}\label{D:L[u]}
For $x,y \in L$, 
\begin{enumeratei}
\item let $(\set{u_0,u_1}, \leq_u)$ be isomorphic to the two-element chain;
\item let $x \leq_u y$ be equivalent to  $x \leq y$ for $x,y \nin \set{u_0,u_1}$;
\item let $u_i \leq_u y$ be equivalent to  $u \leq y$ for $i = 1,2$;
\item let $x \leq_u u_i$ be equivalent to  $x \leq u$ for $i = 1,2$.
\end{enumeratei}
\end{definition}

\begin{lemma}\label{L:splitting}\hfill

The following statements hold:
\begin{enumeratei}
\item the ordered set $L[u]$ is a lattice;
\item the element $u_0$ is meet-irreducible, the element $u_1$ is join-irreducible;
moreover, \text{$u_0 \prec u_1$} in $L[u]$;
\item for $u \nin \set{a,b,c}$, the join if $a \jj b = c$ in $L$ 
is the same as the join $a \jj b = c$ in $L[u]$ and dually;
\item for $u \neq a$, the meet $u_0 \mm a$ in $L[u]$ is the same 
as the meet $u \mm a$ in $L$ and dually;
\item the congruence $\bgm = \con{u_0, u_1}$ on $L[u]$ 
has only one nontrivial class, $\set{u_0,u_1}$;
moreover, $L[u] / \bgm \iso L$. 
\end{enumeratei}
\end{lemma}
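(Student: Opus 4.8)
The plan is to organize everything around the projection $\pi \colon L[u] \to L$ that fixes each element of $L - \set{u}$ and sends both $u_0$ and $u_1$ to $u$. From Definition~\ref{D:L[u]} one reads off that for $x, y \in L[u]$ we have $x \leq_u y$ iff $\pi(x) \leq \pi(y)$, with the sole exception that $u_1 \not\leq_u u_0$. Using this characterization I would first check that $\leq_u$ is a partial order; the only transitivity case needing comment is $u_i \leq_u y \leq_u u_j$ with $y$ non-special, which forces $u \leq y \leq u$ and hence $y = u \nin L[u]$, so it is vacuous. For (i) I would then exhibit meets and joins explicitly: writing $m = \pi(a) \mm \pi(b)$ (computed in $L$), I claim that $a \mm_u b = m$ when $m \neq u$, while if $m = u$ then $a \mm_u b = u_0$ in case $u_0 \in \set{a,b}$ and $a \mm_u b = u_1$ otherwise; joins are dual, with the roles of $u_0$ and $u_1$ swapped. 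Each clause is checked by verifying that the proposed value is a lower (resp. upper) bound dominating (resp. dominated by) every other, which is routine. These formulas also show at once that $\pi$ is a surjective lattice homomorphism, a fact I use below. I expect the only real obstacle to be the $m = u$ clause: one must see that the greatest lower bound lands on the upper copy $u_1$ unless the lower copy $u_0$ is itself one of the inputs. Alternatively, (i) is the special case of Day's theorem for doubling a convex subset, so one may instead cite \cite{gG74} and \cite{Day94}.

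Parts (ii)--(iv) then follow from the order characterization and the formulas. An element strictly between $u_0$ and $u_1$ would satisfy $u \leq x \leq u$, which is impossible; the same computation shows that $u_1$ is the unique upper cover of $u_0$ and $u_0$ the unique lower cover of $u_1$, giving $u_0 \prec u_1$ together with the meet-irreducibility of $u_0$ and the join-irreducibility of $u_1$, which is (ii). For (iii), if $a, b, c \in L - \set{u}$ then $c = a \jj b$ in $L$ means $\pi(a) \jj \pi(b) = c \neq u$, so the join clause gives $a \jj_u b = c$; conversely $\pi$ preserves joins, so $a \jj_u b = c$ forces $a \jj b = c$ in $L$, and meets are dual. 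Part (iv) is exactly the borderline $m = u$: when $a \not\geq u$ we get $u_0 \mm_u a = u \mm a$ outright, while $a \geq u$ gives $u_0 \mm_u a = u_0$, whose $\pi$-image is the same $u \mm a = u$; the join with $u_1$ is dual.

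For (v), let $\theta$ be the equivalence relation on $L[u]$ whose only non-singleton block is $\set{u_0, u_1}$; thus $\theta = \ker\pi$. Since $\pi$ is a lattice homomorphism, $\theta$ is a congruence and $L[u]/\theta \iso L$, with $\set{u_0,u_1}$ as its unique nontrivial class. It remains to identify $\theta$ with $\bgm = \con{u_0,u_1}$. As $\theta$ collapses $u_0$ and $u_1$, minimality gives $\bgm \leq \theta$; conversely $(u_0,u_1)$ is the only collapse performed by $\theta$ and it lies in $\bgm$, so $\theta \leq \bgm$. Hence $\bgm = \theta$, which is the assertion.
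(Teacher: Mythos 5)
Your proposal is correct, but there is nothing in the paper to compare it against: the paper states Lemma~\ref{L:splitting} without proof, presenting it as a review of familiar properties of the doubling construction and referring the reader to \cite{gG74} and \cite{Day94}. Your argument is a legitimate self-contained verification, and it follows the standard route one finds in that literature: the retraction $\pi\colon L[u]\to L$ with the order characterization ``$x\leq_u y$ iff $\pi(x)\leq\pi(y)$, except $u_1\not\leq_u u_0$,'' explicit meet and join formulas (with the $m=u$ clause landing on $u_1$ unless $u_0$ is an input), the observation that these formulas make $\pi$ a surjective homomorphism, and the identification $\bgm=\ker\pi$ for part (v) -- this is precisely how Day's treatment organizes the verification, so you have in effect supplied the proof the paper delegates to its references. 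Your reading of (iv) is also the right one: when $a\geq u$ the two meets agree only under the identification $\pi(u_0)=u$, which is clearly the intended sense of ``the same.'' One point of phrasing deserves care: in (ii), having $u_1$ as the \emph{unique upper cover} of $u_0$ does not by itself imply meet-irreducibility in an infinite lattice (an element can have a unique upper cover yet be the meet of elements none of which cover it). What your order characterization actually yields is the stronger statement that \emph{every} element strictly above $u_0$ lies above $u_1$ (since $x\geq_u u_0$, $x\neq u_0$ forces $\pi(x)\geq u$, hence $x\geq_u u_1$), and it is this stronger fact that gives the (complete) meet-irreducibility of $u_0$; you should state it that way rather than via the covering relation.
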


%
%

The following lemma is well-known.  
It holds more generally for doubling intervals, but not
doubling arbitrary convex sets.

\begin{lemma}\label{L:lu}
If $L$ is a semidistributive lattice, then so is $L[u]$.
\end{lemma}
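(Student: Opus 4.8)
The plan is to prove only one of the two semidistributive laws and obtain the other by duality. The doubling construction commutes with passing to the order dual: the identity on $L - \set{u}$ together with the transposition of $u_0$ and $u_1$ is an isomorphism $(L[u])^{\partial} \iso (L^{\partial})[u]$ (note that $u_0$ is meet-irreducible and $u_1$ join-irreducible by Lemma~\ref{L:splitting}(ii), and these roles swap under dualization). Hence it suffices to show that meet-semidistributivity of $L$ forces meet-semidistributivity of $L[u]$; applying this to $L^{\partial}$ then yields the join half, and the two together give that $L[u]$ is semidistributive. So I would fix $x,y,z,w \in L[u]$ with $w = x \mm y = x \mm z$, all operations computed in $L[u]$, and aim to prove $w = x \mm (y \jj z)$.

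The engine of the argument is the quotient map. By Lemma~\ref{L:splitting}(v), the canonical surjection $\varphi\colon L[u] \to L$ with kernel $\bgm$ is a lattice homomorphism that collapses exactly $\set{u_0,u_1}$ to $u$ and fixes every other element. Applying $\varphi$ to the hypotheses gives $\varphi(x)\mm\varphi(y) = \varphi(x)\mm\varphi(z) = \varphi(w)$ in $L$; meet-semidistributivity of $L$ then yields $\varphi(x)\mm(\varphi(y)\jj\varphi(z)) = \varphi(w)$, that is, $\varphi(x\mm(y\jj z)) = \varphi(w)$. Thus $x\mm(y\jj z)$ and $w$ lie in the same $\bgm$-block. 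Since the only nontrivial block is $\set{u_0,u_1}$, either $x\mm(y\jj z) = w$, and we are finished, or else $\set{x\mm(y\jj z),\,w} = \set{u_0,u_1}$. Because $y \le y\jj z$ gives $w = x\mm y \le x\mm(y\jj z)$ by monotonicity of the meet, the only surviving exceptional configuration is $w = u_0$ and $x\mm(y\jj z) = u_1$.

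Ruling out this configuration is the one delicate point, and it is exactly where the structure of a single doubled element is used. From Definition~\ref{D:L[u]} one checks that a meet in $L[u]$ can equal the lower copy $u_0$ only when one of its two arguments already equals $u_0$: if $a\mm b = u_0$ with $a,b \neq u_0$, then $u_0 \le a,b$ forces $u_1 \le a,b$ (immediately if an argument is $u_1$, otherwise by clause~(iii), since $u_0 \le a$ gives $u \le a$ and hence $u_1 \le a$), so $u_1 > u_0$ is a common lower bound of $a,b$, contradicting $a \mm b = u_0$. Applying this to $x \mm y = u_0$ and $x\mm z = u_0$: if $x = u_0$ then $x\mm(y\jj z) \le u_0 < u_1$, against $x\mm(y\jj z) = u_1$; and if $x \neq u_0$ then necessarily $y = u_0 = z$, so $y \jj z = u_0$ and again $x\mm(y\jj z) = x\mm u_0 \le u_0 < u_1$, the same contradiction. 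Hence the exceptional configuration never arises and $L[u]$ is meet-semidistributive. (Alternatively, one could run the argument through the ideal characterization of Lemma~\ref{L:prime}, but the homomorphism route keeps the bookkeeping minimal.) I expect everything except the "a meet equals $u_0$ only if an argument is $u_0$" observation, and its implicit join-dual used in the duality reduction, to be routine.
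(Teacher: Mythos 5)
Your proof is correct, but there is nothing in the paper to compare it against: the paper gives no proof of Lemma~\ref{L:lu} at all, stating it as well-known (with the pointers \cite{gG74}, \cite{Day94} to the doubling literature, where the standard theorem is that doubling any \emph{interval} preserves semidistributivity). So your argument is a self-contained substitute rather than a variant of the paper's. It checks out at every step: the isomorphism $(L[u])^{\partial} \iso (L^{\partial})[u]$ swapping $u_0$ and $u_1$ is easily verified from Definition~\ref{D:L[u]}, so the reduction to the meet half is legitimate; the quotient map $\varphi$ is a lattice homomorphism by Lemma~\ref{L:splitting}(v), so meet-semidistributivity of $L$ does put $w$ and $x \mm (y \jj z)$ in the same $\bgm$-block; monotonicity leaves only the configuration $w = u_0$, $x \mm (y \jj z) = u_1$; and your observation that $a \mm b = u_0$ forces $u_0 \in \set{a,b}$ is exactly the meet-irreducibility of $u_0$ asserted in Lemma~\ref{L:splitting}(ii), which you could cite instead of reproving (your direct verification via clause (iii) of Definition~\ref{D:L[u]} is also fine). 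What your route buys is elementarity and self-containedness; what it gives up is generality: the argument leans on the fact that $\bgm$ has a \emph{single} nontrivial block whose bottom element is meet-irreducible, so it does not extend verbatim to interval doubling (where the lower copies need not be meet-irreducible), which is the setting the cited general theory covers and which the paper's remark before the lemma alludes to. For the single-element doubling actually needed here, your proof is complete.
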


We define the congruence  $\bgm = \con{u_0, u_1}$ on the lattice $L[u]$.

\begin{lemma}\label{L:all}
The congruence $\bgm$ is an atom in the congruence lattice, $\Con{L[u]}$,
of the lattice $L[u]$.
\end{lemma}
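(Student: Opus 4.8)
The plan is to use the explicit description of the blocks of $\bgm$ recorded in Lemma~\ref{L:splitting}(v). Since $\bgm = \con{u_0, u_1}$ identifies the distinct elements $u_0$ and $u_1$, it lies strictly above the least congruence $\Delta$ of $\Con{L[u]}$; it therefore suffices to show that no congruence lies strictly between $\Delta$ and $\bgm$. I would fix an arbitrary congruence $\theta$ of $L[u]$ with $\Delta < \theta \le \bgm$ and argue that $\theta = \bgm$.

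The key step is to read off the possible blocks of such a $\theta$. Because $\theta \le \bgm$, every $\theta$-block is contained in a $\bgm$-block, and by Lemma~\ref{L:splitting}(v) the only nonsingleton $\bgm$-block is $\set{u_0, u_1}$. Hence every $\theta$-block other than the one meeting $\set{u_0, u_1}$ is forced to be a singleton. Since $\theta > \Delta$, the congruence $\theta$ collapses at least one pair of distinct elements, and that pair must lie in a common $\bgm$-block; the only pair available is $u_0, u_1$. Therefore $u_0 \mathrel{\theta} u_1$.

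It then follows that $\bgm = \con{u_0, u_1} \le \theta$, and combined with $\theta \le \bgm$ this gives $\theta = \bgm$. Thus the only congruences below $\bgm$ are $\Delta$ and $\bgm$, so $\bgm$ covers $\Delta$ and is an atom of $\Con{L[u]}$. I do not expect a genuine obstacle here: everything reduces to the fact that the single nontrivial $\bgm$-class has exactly two elements, leaving no room for an intermediate partition. The only point requiring a word of care is the standard observation that $\theta \le \bgm$ forces each $\theta$-class to refine a $\bgm$-class, which is immediate once congruences are viewed as partitions ordered by refinement.
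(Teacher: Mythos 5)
Your proposal is correct and takes essentially the same route as the paper: the paper's proof simply declares the atomicity obvious from the fact that the unique nontrivial $\bgm$-class is the two-element set $\set{u_0,u_1}$, and your argument (any $\theta$ with $\Delta < \theta \le \bgm$ must collapse $u_0$ and $u_1$, hence equals $\bgm$) is exactly the detail behind that ``obviously.''
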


\begin{proof}
Since $\bgm$ has only one nontrivial congruence class, namely, $\set{u_0,u_1}$,
it is obviously at atom in $\Con{L[u]}$. 
\end{proof}

We can start with a finite antichain $U \ci L$ and define 
\begin{equation}\label{E:lrU}
L[U] = (L - \set{u}) \uu \UUm{\set{u_0, u_1}}{u \in U}
\end{equation}
(disjoint unions). We define $\leq_U$ analogously.
The obvious analogue of Lemma~\ref{L:splitting} holds.

In the lattice $L[U]$, define the congruence  $\bgm_u = \con{u_0, u_1}$ for $u \in U$.
Then Lemma~\ref{L:all} holds in $L[U]$ for all $\bgm_u$.
 
\section{Representing the three-element chain and $(\SB 2)_{++}$}\label{S:Representing}

There are only two finite distributive lattices that are known not to be the congruence
lattice of a finite semidistributive lattice minimally \cite{JBN23}.  
The minimality refers to the fact that neither can they be a filter in $\Con L$, 
since a homomorphic image of a finite semidistributive lattice is semidistributive.
These are the three-element chain and the lattice $(\SB 2)_{++}$.

In Section~\ref{S:main}, 
we will utilize two constructions to represent each of these
as the congruence lattice of an infinite semidistributive lattice.

\begin{lemma}[Construction 1]\label{L:Construction1}
Let $F$ be an FN lattice. 
Define the lattice $F^+$ as the lattice $F$ with a new unit adjoined.
Then $F^+$ is an infinite semidistributive lattice 
and the congruence lattice of $F^+$ is the three-element chain, because $F$ has no largest element.
\end{lemma}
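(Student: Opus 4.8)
The plan is to establish three things about $F^+$: that it is semidistributive, that it is infinite, and that its congruence lattice is the three-element chain. The infiniteness is immediate, since $F$ is infinite and we only adjoin one element. For semidistributivity, I would argue directly from the defining implications $(\textup{SD}_\mm)$ and $(\textup{SD}_\jj)$. Adjoining a new top element $1$ to $F$ means that $1$ acts as an absorbing element for joins and a neutral element for meets: for any $x \in F$ we have $x \jj 1 = 1$ and $x \mm 1 = x$. The key observation is that the new unit is doubly reducible only in a trivial sense — it covers nothing in particular, and every nonunit element's meets and joins are computed exactly as in $F$ unless a $1$ is involved. So I would check the meet-semidistributive implication case by case according to how many of the elements $x,y,z,w$ equal $1$; in each case the hypothesis either reduces to the corresponding instance in the semidistributive lattice $F$, or becomes degenerate (for instance, if $x = 1$ then $x \mm y = y$ and $x \mm z = z$, forcing $y = z = w$, and the conclusion holds trivially). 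The join case is handled dually, and here the presence of the absorbing top actually makes the hypotheses easier to satisfy but the conclusions correspondingly easy to verify.

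For the congruence lattice, the plan is to show $\Con{F^+}$ has exactly three elements forming a chain: the trivial congruence $\mathbf{0}$, the full congruence $\mathbf{1}$, and one congruence strictly between them, namely the congruence $\bga$ collapsing $F$ to a single block while keeping $\set{1}$ separate. First I would verify that $\bga$, whose only nontrivial block is $F$ itself (so that the two blocks are $F$ and $\set{1}$), is indeed a congruence: this amounts to checking that the partition $\set{F, \set{1}}$ respects joins and meets, which follows from the absorbing/neutral behaviour of $1$. Since $F$ has no largest element, $1$ is not the join of any pair of elements of $F$, so $1$ is join-irreducible and its collapse is controlled.

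The main work is to show there are no other nontrivial congruences, and here I would invoke the simplicity of $F$. Let $\bgt$ be any congruence on $F^+$ other than $\mathbf{0}$. The restriction of $\bgt$ to $F$ is a congruence on $F$ (since $F$ is a sublattice, indeed an ideal, of $F^+$), and because $F$ is simple this restriction is either $\mathbf{0}_F$ or $\mathbf{1}_F$. I would analyze these two possibilities: if the restriction is $\mathbf{1}_F$, then $\bgt$ collapses all of $F$, so $\bgt \geq \bga$, and then $\bgt$ is either $\bga$ or $\mathbf{1}$ depending on whether $1$ is collapsed with $F$. If instead the restriction to $F$ is $\mathbf{0}_F$, then $\bgt$ separates all pairs within $F$, and I must rule out $\bgt$ identifying $1$ with some $a \in F$ while keeping $F$ totally separated; the obstacle here is showing this cannot happen without collapsing more of $F$. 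I expect this to be the crux of the argument: if $a \mathrel{\bgt} 1$ for some $a \in F$, then for every $x \in F$ we get $a \mm x \mathrel{\bgt} 1 \mm x = x$ and $a \jj x \mathrel{\bgt} 1 \jj x = 1$, and using that $F$ has no top, one can produce elements $x$ with $a \mm x = a$ but $a \ne x$, forcing a nontrivial identification within $F$ and hence $\bgt|_F \neq \mathbf{0}_F$, a contradiction. This pins down the three congruences and their linear ordering $\mathbf{0} < \bga < \mathbf{1}$, completing the proof.
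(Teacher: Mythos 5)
Your proof is correct and takes essentially the same route as the paper, whose own proof simply asserts that the only nontrivial congruence of $F^+$ is the one whose single nontrivial block is $F$. Your additional details --- checking semidistributivity case by case, restricting a congruence to the simple sublattice $F$, and using the absence of a largest element of $F$ to rule out $1$ being collapsed with some $a \in F$ --- are exactly the verifications the paper leaves as obvious.
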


\begin{proof}
Obviously,  the lattice  $F^+$ is an infinite semidistributive lattice.
Indeed, the only nontrivial congruence of the lattice $F^+$
is the congruence with one nontrivial congruence class, namely, $F$.
\end{proof}

\begin{lemma}[Construction 2]\label{L:construction2}
Let $F$ be an FN lattice and let $u \in F$.
We double the element $u$ and claim that $F[u]$ is an infinite semidistributive lattice
and the congruence lattice of $F[u]$ is the three-element chain.
\end{lemma}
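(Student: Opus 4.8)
The plan is to compute $\Con{F[u]}$ directly and show that it equals the three-element chain $\Delta < \bgm < \nabla$. That $F[u]$ is infinite is clear, and it is semidistributive by Lemma~\ref{L:lu}, so only the congruence lattice is at issue. First I would record the two facts already in hand: $\bgm = \con{u_0,u_1}$ is an atom of $\Con{F[u]}$ by Lemma~\ref{L:all}, and $F[u]/\bgm \iso F$ by Lemma~\ref{L:splitting}. Since $F$ is simple, the correspondence theorem identifies the interval $[\bgm,\nabla]$ of $\Con{F[u]}$ with $\Con F \iso \mathbf 2$, so the only congruences above $\bgm$ are $\bgm$ and $\nabla$. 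Moreover $\Delta < \bgm < \nabla$, because $\bgm$ identifies the distinct elements $u_0, u_1$ while the quotient $F[u]/\bgm \iso F$ is nontrivial. Hence everything reduces to a single claim: $\bgm$ is the monolith, that is, every congruence $\theta \neq \Delta$ satisfies $\bgm \le \theta$.

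To prove this, suppose $\theta \neq \Delta$ but $(u_0,u_1) \nin \theta$. As $\bgm$ is an atom, $\theta \mm \bgm = \Delta$. If $\theta \jj \bgm = \bgm$, then $\theta \le \bgm$ and so $\theta = \theta \mm \bgm = \Delta$, a contradiction; hence, by the previous paragraph, $\theta \jj \bgm = \nabla$, so that $\theta$ is a complement of $\bgm$. Since $\bgm$, as an equivalence relation, differs from $\Delta$ only by the single identification $u_0 \sim u_1$, the join $\theta \jj \bgm$ is obtained from $\theta$ by merging the $\theta$-class of $u_0$ with the $\theta$-class of $u_1$; for the result to be $\nabla$, the relation $\theta$ must already have exactly two classes, an ideal $I \ni u_0$ and a filter $\phi \ni u_1$ (the inverse images of the two elements of $F[u]/\theta \iso \mathbf 2$). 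Excluding such a two-class congruence is the crux of the whole argument, and I expect it to be the main obstacle.

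Here I would exploit that $u_0$ is meet-irreducible and $u_1$ is join-irreducible. Because $u_1$ is join-irreducible with unique lower cover $u_0$, every $z < u_1$ satisfies $z \le u_0$; so for $y \in \phi$ the element $y \mm u_1 \in \phi$ (filters are meet-closed) is $\le u_1$, and were it below $u_1$ it would lie $\le u_0$ and hence in $I$, which is impossible as $I \mm \phi = \es$; thus $y \mm u_1 = u_1$ and $y \ge u_1$. Therefore $\phi = \fil{u_1}$, and dually $I = \setm{x}{x \le u_0}$. Consequently every element of $F[u]$ is $\le u_0$ or $\ge u_1$, which means every element of $F$ is comparable to $u$. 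But then, because $F$ has neither a least nor a greatest element, collapsing the principal ideal $\setm{x}{x \le u}$ to one class while leaving each element above $u$ a singleton is a nontrivial proper congruence of $F$: the substitution properties hold precisely because each $x > u$ satisfies $p \le u \le x$, so $p \jj x = x$ and $p \mm x = p$ for every $p \le u$. This contradicts the simplicity of $F$. Hence the complement case cannot occur, $\theta = \Delta$, and $\bgm$ is the monolith, so $\Con{F[u]}$ is the three-element chain. It is exactly in this last step that the hypotheses that $F$ is simple and has no bounds are essential; indeed, doubling an endpoint of the two-element chain yields $\mathbf 3$, whose congruence lattice is $\mathbf 2 \times \mathbf 2$, not a chain.
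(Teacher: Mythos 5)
Your proof is correct, and it does considerably more than the paper's own proof, so the comparison is worth spelling out. The paper disposes of this lemma in two lines: $F[u]$ is infinite and semidistributive by Lemma~\ref{L:lu}; by Lemma~\ref{L:all} it ``has only one nontrivial congruence, namely $\bgm$''; hence $\Con{F[u]}$ is the three-element chain, with only the parenthetical remark ``(This uses $u \ne 0$ or $1$, but $F$ has neither.)'' But Lemma~\ref{L:all} states only that $\bgm$ is an \emph{atom} of $\Con{F[u]}$, so the paper's proof silently passes over exactly the point you isolate as the crux: why no nontrivial congruence $\theta$ can avoid collapsing $u_0$ and $u_1$. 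Your treatment of that point is sound. Since the join of two congruences is the transitive closure of their union, and $\bgm$ merges only $u_0$ with $u_1$, a complement $\theta$ of $\bgm$ must have exactly two classes; your irreducibility argument (every element strictly below $u_1$ lies below $u_0$, and dually) identifies the classes as $\id{u_0}$ and $\fil{u_1}$, so every element of $F$ would be comparable to $u$; and then the relation on $F$ whose classes are $\id{u}$ and the singletons $\set{x}$ for $x > u$ is a congruence, nontrivial and proper precisely because $F$ has no least and no greatest element, contradicting simplicity. That is the content hidden in the paper's parenthetical, and your closing example---doubling an endpoint of the two-element chain yields the three-element chain, whose congruence lattice is $\SB 2$, not a chain---confirms the hypothesis is genuinely needed. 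In short: you follow the same outline (Lemma~\ref{L:lu}, Lemma~\ref{L:all}, Lemma~\ref{L:splitting}(v) plus simplicity of $F$), but you supply the monolith argument that the paper merely asserts; a reader who wants a complete proof of this lemma needs your paragraph, not the paper's.
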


\begin{proof}
By Lemma~\ref{L:lu}, 
the lattice $F[u]$ is an infinite semidistributive lattice
and by Lemma~\ref{L:all}, 
the lattice $F[u]$ has only one nontrivial congruence, namely, $\bgm$,
so the congruence lattice of $F[u]$ is the three-element chain.  
(This uses $u \ne 0$ or $1$, but $F$ has neither.)
\end{proof}

In constructing a semidistributive lattice to represent $(\SB 2)_{++}$, we anticipate
Theorem~\ref{T:main2}.  
As in Corollary~\ref{C:main2-1}, we replace the isolated interval in $\nfive$ 
with an infinite, simple semidistributive lattice $F$.  Then double an element $u$ in 
the interval $F$ to obtain $(\SB 2)_{++}$ as the congruence lattice.

\section{Proving Theorem~\ref{T:main}}\label{S:main}

We define  $\SL 0$ as $F$. 
Since $\SB 0$ is the one-element chain, so $\SB 0^+ $ is the two-element chain, which 
is the congruence lattice of $F$.  Thus the statement of Theorem~\ref{T:main}
holds for $n = 0$.

Next we verify the statement of Theorem~\ref{T:main} for $n > 0$.
We prove the following, slightly stronger, result.

\begin{claim}\label{C:main}
There is an infinite semidistributive lattice $\SL n$ \emph{without bounds}
whose congruence lattice is isomorphic to  $\SB n^+$ for $n > 0$.
\end{claim}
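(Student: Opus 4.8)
The plan is to realize $\SL n$ as the lattice obtained from $F$ by doubling an $n$-element antichain. Fix an $n$-element antichain $U = \set{u_1, \dots, u_n}$ of $F$ (an FN lattice contains antichains of every finite size) and set $\SL n = F[U]$. Since $F$ is infinite, so is $F[U]$; by iterating Lemma~\ref{L:lu} it is semidistributive; and since $F$ has neither a zero nor a unit, each $u_i$ is an interior element, so the quotient map $F[U] \to F$ shows that $F[U]$ can have no bounds either (a bound of $F[U]$ would project to a bound of $F$). This gives the infinite, semidistributive, unbounded lattice required; it remains to compute its congruence lattice.

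Write $\bgm_{u_i} = \con{(u_i)_0, (u_i)_1}$ and $\theta = \bgm_{u_1} \jj \dots \jj \bgm_{u_n}$, and denote by $\mathbf 0$ and $\mathbf 1$ the least and greatest congruences of $F[U]$. By Lemma~\ref{L:all}, each $\bgm_{u_i}$ is an atom whose only nontrivial block is the pair $\set{(u_i)_0, (u_i)_1}$. As these $n$ blocks are pairwise disjoint, the join $\bigvee_{i \in S}\bgm_{u_i}$ (the transitive closure of vertex-disjoint pairs) has exactly the blocks $\set{(u_i)_0,(u_i)_1}$ with $i \in S$; hence $S \mapsto \bigvee_{i \in S}\bgm_{u_i}$ is an isomorphism from the power set of $\set{1, \dots, n}$ onto $[\mathbf 0, \theta]$, so $[\mathbf 0, \theta] \iso \SB n$ with the $\bgm_{u_i}$ as its atoms. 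Collapsing all $n$ doubled pairs recovers $F$, so by Lemma~\ref{L:splitting}(v), applied $n$ times, $F[U]/\theta \iso F$. Since $F$ is simple, the interval $[\theta, \mathbf 1] \iso \Con F$ is the two-element chain $\set{\theta, \mathbf 1}$; in particular $\theta \prec \mathbf 1$.

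It remains to prove that there are no further congruences, i.e.\ that every congruence $\phi \ne \mathbf 1$ satisfies $\phi \le \theta$; this is the crux. For $n = 1$ we have $\theta = \bgm_{u_1}$ and the assertion is exactly Construction~2 (Lemma~\ref{L:construction2}), giving the three-element chain. Suppose $n \ge 2$. For each $i$ put $\sigma_i = \bigvee_{j \ne i}\bgm_{u_j}$, the coatom of $\SB n = [\mathbf 0, \theta]$ ``missing'' $\bgm_{u_i}$; then $\bigwedge_i \sigma_i = \mathbf 0$, while collapsing all doubled pairs except the $i$-th gives $F[U]/\sigma_i \iso F[u_i]$. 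By Construction~2, $\Con{F[u_i]}$ is the three-element chain, so the congruences of $F[U]$ lying above $\sigma_i$ are exactly $\sigma_i$, $\theta = \sigma_i \jj \bgm_{u_i}$, and $\mathbf 1$. Hence $\phi \jj \sigma_i \in \set{\sigma_i, \theta, \mathbf 1}$ for every congruence $\phi$ and every $i$.

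Since the congruence lattice of a lattice is distributive, I would then compute
\[
  \phi = \phi \jj \mathbf 0 = \phi \jj \Bigl(\bigwedge_{i} \sigma_i\Bigr) = \bigwedge_i (\phi \jj \sigma_i).
\]
If $\phi \ne \mathbf 1$, not every factor can equal $\mathbf 1$ (else the meet, namely $\phi$, would be $\mathbf 1$), so some $\phi \jj \sigma_{i_0} \in \set{\sigma_{i_0}, \theta}$, whence $\phi \le \phi \jj \sigma_{i_0} \le \theta$. Thus $\Con{\SL n}$ consists of the Boolean interval $[\mathbf 0, \theta] \iso \SB n$ together with the extra top $\mathbf 1$ covering $\theta$, which is precisely $\SB n^+$. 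The main obstacle is exactly this last step: excluding a congruence $\phi$ with $\phi \jj \theta = \mathbf 1$ yet $\phi \ne \mathbf 1$ (a complement of $\theta$, which would wrongly enlarge $\Con{\SL n}$); the displayed distributive identity, fed by the $n = 1$ computation of Construction~2, is what forecloses this.
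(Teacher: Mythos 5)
Your construction is exactly the paper's: $\SL n = F[U]$ for an $n$-element antichain $U \ci F$, with the congruences $\bigvee_{i \in S}\bgm_{u_i}$ forming a copy of $\SB n$ at the bottom and $F[U]/\theta \iso F$ supplying the coatom-to-top step of $\SB n^+$. The genuine difference lies at the step you call the crux. The paper, after exhibiting the sublattice $C \iso \SB n$ and noting $F[U]/\bgm[U] \iso F$, simply asserts ``it follows'' that $\Con F[U]$ consists of $C$ and $\unit_{F[U]}$, giving no argument that excludes further congruences (in particular, a complement of $\bgm[U]$). You actually prove this: for $n \ge 2$ you form the coatoms $\sigma_i = \bigvee_{j \ne i}\bgm_{u_j}$ of the Boolean interval, observe $\bigwedge_i \sigma_i = \mathbf 0$ (correct, since a nontrivial block of $\sigma_i$ is a doubled pair with index $\ne i$), identify $[\sigma_i, \mathbf 1] \iso \Con{F[u_i]}$ with the three-element chain via Lemma~\ref{L:construction2} and the correspondence theorem, and then use distributivity of congruence lattices (Funayama--Nakayama, which the paper never invokes) to get $\phi = \bigwedge_i(\phi \jj \sigma_i) \le \theta$ for every congruence $\phi \ne \mathbf 1$; the case $n = 1$ is Lemma~\ref{L:construction2} itself. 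All of these steps check out, so your proof is complete modulo Lemma~\ref{L:construction2}, whereas the paper's is complete only modulo its unproved ``it follows.'' What your route buys is precisely that missing verification, packaged as a clean reduction of the general $n$ to the $n=1$ case; what it costs is dependence on Lemma~\ref{L:construction2}, whose own proof in the paper is terse (Lemma~\ref{L:all} only gives that $\bgm$ is an atom, and ruling out a complement of $\bgm$ is where the hypothesis that $F$ has no bounds enters), but citing that lemma is entirely legitimate.
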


\begin{proof}
First, we note that the lattice $F$ has antichain of any finite size,
for instance, contained in the countably infinite antichain $\set{b_0, b_1, \dots}$
using the notation in R.~Freese and J.\,B. Nation \cite{FN21}.

We start with an antichain $U \ci F$ of $n > 0$ elements and define $\SL n = F[U]$.
For every $V \ci U$, define 
\[
   \bgm[V] = \UUm{\bgm_v}{v \in V},
\]
a congruence of $\SL n = F[U]$.
The congruences $\setm{\bgm[V]}{V \ci U}$
form a sublattice~$C$ isomorphic to $\SB n$ of~$\Con F[U]$, the congruence lattice of $F[U]$.
The unit element of~$C$ is  $\bgm[U]$ and $F[U]/\bgm[U] \iso F$.
The zero element of  $C$ is $\bgm[\es]$ and $\bgm[\es] = \zero_{F[U]}$.
It follows that  the congruence lattice of $F[U]$ consists of $\unit_{F[U]}$ and $C$,
so it is isomorphic to  $\SB n^+$, that is $\SB n$ with a new unit adjoined.
\end{proof}

We have obtained a stronger form of Theorem~\ref{T:main}.

\begin{corollary}\label{C:strong}
There is an infinite semidistributive lattices without bounds $\SL i$ such that 
the congruence lattice of \,$\SL i$ is isomorphic to  $\SB i^+$
for $n = 0,1, 2, \dots$.
\end{corollary}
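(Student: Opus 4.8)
The plan is to assemble the corollary from material already in hand, so that the only genuinely new point is the \emph{without bounds} clause. The congruence-lattice computations have all been carried out: for $i>0$ in Claim~\ref{C:main}, and for $i=0$ in the discussion opening Section~\ref{S:main}. I would therefore split into these two cases and, in each, verify that the witnessing lattice has neither a least nor a greatest element.

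For $i=0$ I would take $\SL 0 = F$. By definition an FN lattice is infinite and semidistributive, and, as recorded in the Introduction, a simple join-semidistributive lattice can possess neither a greatest nor a least element; hence $F$ is without bounds. Since $\SB 0$ is the one-element lattice, $\SB 0^+$ is the two-element chain, which is exactly $\Con F$. For $i>0$ I would invoke Claim~\ref{C:main} directly: it furnishes $\SL i = F[U]$ with $U\ci F$ an antichain of $i$ elements, an infinite semidistributive lattice whose congruence lattice is $\SB i^+$.

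The step I expect to carry out with care is the absence of bounds of $F[U]$ for $i>0$, since Claim~\ref{C:main} asserts but does not dwell on it. I would argue it straight from the order $\leq_U$ of Definition~\ref{D:L[u]}. Suppose $F[U]$ had a greatest element $m$. Then $m$ is either an undoubled element of $F$ or one of the new elements $u_0,u_1$ for some $u\in U$; in each case the defining equivalences of $\leq_U$ force the corresponding element of $F$ to lie above every element of $F$, contradicting that $F$ has no unit. The dual argument excludes a least element, and since $U$ is finite only finitely many elements are doubled, so no new bound can appear. Combining the two cases then gives the corollary. The main obstacle is thus not the congruence lattice at all but this bookkeeping with $\leq_U$; everything else is a direct appeal to Claim~\ref{C:main} and to the bound-freeness of $F$.
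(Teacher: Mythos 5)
Your proposal is correct and follows the paper's route exactly: the case $i=0$ is witnessed by $F$ itself (bound-free since a simple semidistributive lattice has neither $0$ nor $1$), and the case $i>0$ is Claim~\ref{C:main}, the paper treating the corollary as a mere restatement of these two facts. Your explicit check via $\leq_U$ that $F[U]$ has no bounds just fills in a detail the paper's Claim asserts without proof, and it is carried out correctly.
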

 
Observe that Corollary~\ref{C:main2} 
follows from Theorem~\ref{T:main} and Lemma~\ref{L:Construction1}.
 
To verify Corollary~\ref{C:main3}, we start with $L_0 = F^+_+$.
We obtain the lattice $L_i$ by~replacing $F$ with $F[U]$ as in the proof of Claim~\ref{C:main}.

\section{Proving Theorem~\ref{T:main2}}\label{S:main2}

Let $L$ be a finite semidistributive lattice, $D = \Con L$,
and let $[a, b]$ be an isolated interval of~$L$.
If $L$ has $2$ elements, the result is trivial, 
so we can assume that $L$ has~$3$ or more elements.
We denote $L - \set{a,b}$ by $\ab$;  it is a sublattice of $L$
because $a$ and $b$ are doubly-irreducible.

The lattice $\ab$ has a natural partition into the sets
\begin{align}\label{Eq:partition}
P &= \setm{x < a}{x \in\ab} = \id {a_*},\notag\\
Q &= \setm{x > b}{x \in \ab} \,= \fil{b^*},\\
R&= \setm{x \nless a}
{x \in\ab} \uu \setm{x \not> b}{x \in\ab}.\notag
\end{align}

We shall construct an infinite semidistributive lattice $K$ such that 
$D$ is the congruence lattice of $K$.

If $L$ has $2$ elements, then $K = F$. So we can assume that 
$L$ has more than~$2$ elements.

Let $K =  \ab \uu F$, where  $F$ is an FN lattice 
(a simple, infinite, semidistributive lattice). 
Note that the lattice $K$ has a natural partition into the sets 
(\ref{Eq:partition}) and the set $F$, see Figure \ref{F:partition},
see 
\begin{figure}[htb]
\centerline{\includegraphics{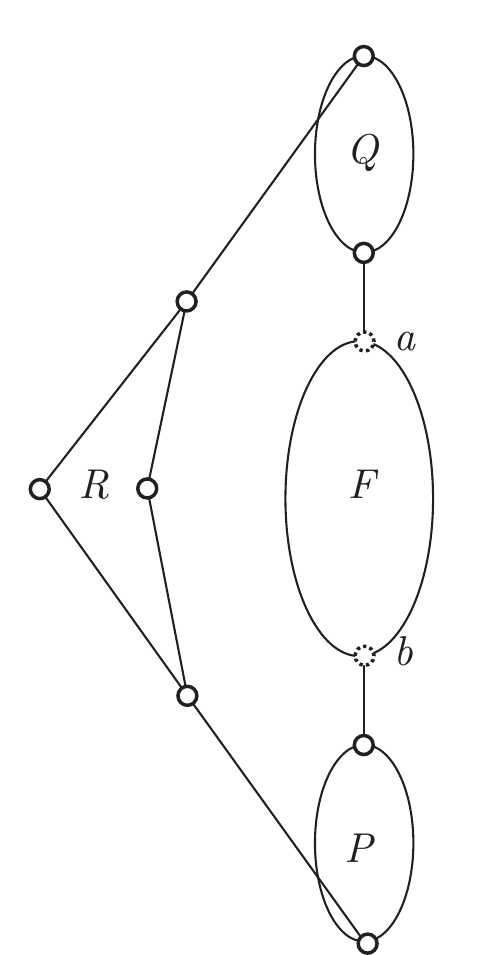}}
\caption{The partitioning of the lattice $K$}
\label{F:partition}
\end{figure}

\begin{definition}\label{D:bin}
We define a binary relation $<_K$ on $K$ by the following rules:
\begin{enumeratei}
\item for $x, y \in  \ab$, let $x <_K y$ 
be equivalent to  $x < y$ in $L$.
\item for $x \in \ab$ and $y \in F$, let $x <_K y$ 
be equivalent to  $x \leq a$ in $L$.
\item for $x \in F$ and $y \in\ab$, let $x <_K y$ 
be equivalent to  $b \leq y$ in $L$.
\end{enumeratei}
\end{definition}

It is easy to see that
$K$ is an ordered set with respect to the binary relation $<_K$.

\begin{lemma}\label{L:lattice}
The ordered set $K$ is a lattice and $\ab$ is a sublattice of $K$.
\end{lemma}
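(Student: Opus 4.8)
The plan is to show directly that every pair of elements of $K$ has both a join and a meet, organizing the argument by which of the two blocks $\ab$ and $F$ the two elements occupy and, when both lie in $\ab$, by which of the three parts $P=\id{a_*}$, $Q=\fil{b^*}$, $R$ they fall into. The construction is self-dual: the order dual $K^\partial$ is exactly the lattice produced by applying the same construction to $L^\partial$ and the lattice $F^\partial$ (which is again an FN lattice, as simplicity, infinitude, and semidistributivity are self-dual, and passing to $L^\partial$ merely interchanges $a$ with $b$ and hence $P$ with $Q$). Consequently it suffices to produce all joins for an arbitrary instance of the construction; meets in $K$ then follow from the joins in $K^\partial$. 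One preliminary observation is used throughout: by rule~(ii) of Definition~\ref{D:bin} an element $f\in F$ can lie above $x\in\ab$ only when $x\in P$, and by rule~(iii) $f$ can lie below $y\in\ab$ only when $y\in Q$; in particular every element of $R$ is incomparable in $K$ to every element of $F$.

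For $x,y\in\ab$ I would compute $x\jj_L y$ in $L$ and first check that it lands in $\ab$: were $x\jj_L y$ equal to $a$ or to $b$, join-irreducibility of $a$ and of $b$ would force $x$ or $y$ to equal $a$ or $b$, which is impossible. An upper bound of $\set{x,y}$ lying in $F$ exists only when $x,y\in P$, and then $x\jj_L y\in P$ as well, so it lies below all of $F$; hence $x\jj_L y$ is the least upper bound in $K$. This computation simultaneously shows that the join in $K$ of two elements of $\ab$ agrees with their join in $L$, and dually for meets, which is precisely the claim that $\ab$ is a sublattice of $K$. The case $x,y\in F$ is handled the same way, using that any upper bound in $\ab$ must lie in $Q$ and hence dominates $x\jj_F y$. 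The mixed cases with the $\ab$-element in $P$ or $Q$ are immediate: if $x\in P$ then $x<_K y$, so $x\jj_K y=y$, and if $x\in Q$ then $y<_K x$, so $x\jj_K y=x$.

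The one case carrying real content, and the step I expect to be the main obstacle, is a mixed join $x\jj_K y$ with $x\in R$ and $y\in F$. Here $x$ and $y$ are incomparable in $K$; no element of $F$ can be an upper bound, since that would require $x\in P$; and an element $z\in\ab$ is an upper bound exactly when $z\in Q$ and $z\ge x$ in $L$. I would propose the candidate $b^*\jj_L x$: it lies in $\ab$ because it is $\ge b^*>b$, it lies in $Q$ and so dominates all of $F$, in particular $y$, and it dominates $x$; conversely, any upper bound $z$ satisfies $z\ge b^*$ (as $z\in Q$) and $z\ge x$, whence $z\ge b^*\jj_L x$ in $L$. Thus $x\jj_K y=b^*\jj_L x$, notably independent of $y$. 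Having produced all joins, the dual formulas (for instance $x\mm_K y=a_*\mm_L x$ when $x\in R$ and $y\in F$) give all meets, so $K$ is a lattice; combined with the $\ab$-$\ab$ computation above, $\ab$ is a sublattice of $K$, completing the proof.
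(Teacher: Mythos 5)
Your proposal is correct and takes essentially the same approach as the paper: the paper declares the direct verification of the lattice property ``tedious'' and skips it, and your case analysis (streamlined by the duality observation) is precisely that verification carried out in full. In particular, your one substantive case---$x \in R$, $y \in F$, with join computed as $b^* \jj x$ in $L$---is exactly the sample step the paper displays, since $x \jj b = x \jj b^*$ in $L$ whenever $x \nleq a$.
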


\begin{proof}
$\ab$ is a sublattice of $K$ because $a$ and $b$ are doubly irreducible 
(see \ref{D:bin}.(ii) and (iii)).
To prove that $K$ is a lattice is tedious, so we skip most of it.
Here is one step: Let $x \in \ab$, $x \nleq a$ in $L$ and $y \in F$.
Let $z = x \jj b$ in $L$. Then $z = x \jj y$ in~$K$.
\end{proof}

\begin{lemma}\label{L:semi}
The lattice $K$ is semidistributive.
\end{lemma}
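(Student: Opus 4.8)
The plan is to prove join-semidistributivity of $K$; meet-semidistributivity then follows by duality. Indeed, the whole construction is self-dual: the dual lattice $K^\partial$ is obtained by exactly the same recipe applied to the dual $L^\partial$ (in which $[b,a]$ is again an isolated interval, double-irreducibility and covering being self-dual notions) and to $F^\partial$, which is again an FN lattice. Hence, once join-semidistributivity is established for every lattice produced by the construction, meet-semidistributivity for $K$ follows by applying that same result to $L^\partial$ and $F^\partial$.

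To handle join-semidistributivity, I would first record the joins and meets that mix $F$ with $\ab$. From Definition~\ref{D:bin} and Lemma~\ref{L:lattice} one gets that every element of $P$ lies below every element of $F$, and every element of $F$ below every element of $Q$, and that, for $r \in R$ and any $f \in F$,
\[
 f \jj r = b \jj r \in Q, \qquad f \mm r = a \mm r \in P,
\]
the joins and meets on the right being computed in $L$; in particular these do not depend on $f$. Using these formulas I would define a map $\beta \colon K \to L$ by $\beta(x) = x$ for $x \in \ab$ and $\beta(f) = b$ for all $f \in F$, and check that $\beta$ is a lattice homomorphism. The only cases that are not immediate are the mixed ones, and they reduce to the two identities
\[
 a \mm r = b \mm r \quad\text{and}\quad a \jj r = b \jj r \qquad (r \in R).
\]
These hold precisely because $a \prec b$ with $b$ join-irreducible (for the first) and $a$ meet-irreducible (for the second): if $b \mm r$ were strictly above $a \mm r$, then $a \jj (b \mm r) = b$ would express the join-irreducible $b$ as a join of two strictly smaller elements, a contradiction, and dually for the second identity. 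Since $\beta$ restricts to the identity on $\ab$ and sends $F$ onto $\{b\}$ with $b \notin \ab$, the congruence $\ker\beta$ has exactly one nontrivial block, namely $F$.

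Now suppose $u = x \jj y = x \jj z$ in $K$, and put $v = x \jj (y \mm z)$, so that $v \le u$. Applying the homomorphism $\beta$ and using that $L$ is join-semidistributive, I get $\beta(u) = \beta(x) \jj \beta(y) = \beta(x) \jj \beta(z)$, hence $\beta(u) = \beta(x) \jj (\beta(y) \mm \beta(z)) = \beta(v)$. Thus $u$ and $v$ lie in the same $\ker\beta$-block; since $v \le u$ and the only nontrivial block is $F$, either $u = v$ (and we are done) or both $u, v \in F$. In the remaining case $u \in F$, the join formulas above force each of $x, y, z$ to lie in $P \uu F$, with at least one of $\{x, y\}$ and at least one of $\{x, z\}$ lying in $F$; a short case check then yields $x \jj (y \mm z) = u$ directly, the essential case $x, y, z \in F$ being exactly join-semidistributivity inside the sublattice $F$.

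The main obstacle is the verification that $\beta$ is a homomorphism, and specifically the two $R$-identities $a \mm r = b \mm r$ and $a \jj r = b \jj r$: this is where double-irreducibility of $a$ and $b$ together with $a \prec b$ is genuinely used, and getting the mixed join and meet formulas right depends on the careful bookkeeping of $P$, $Q$, $R$, and $F$ already needed for Lemma~\ref{L:lattice}. The leftover $u \in F$ case is the only place where semidistributivity of $F$ itself enters, and it is routine once the join table is in hand.
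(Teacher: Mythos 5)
Your proof is correct, but it takes a genuinely different route from the paper's. The paper argues via the criterion of Lemma~\ref{L:prime}: for $u < v$ in $K$ it checks that $I_{u,v}$ is an ideal of $\fil{u}$ by cases according to whether $u$ and $v$ lie in $\ab$ or in $F$, invoking semidistributivity of $L$ (hence of its sublattice $\ab$) and of $F$ for the pure cases, dismissing the mixed case as clear from Figure~\ref{F:partition}, and then dualizing. You instead package the structure of $K$ into a lattice homomorphism $\beta \colon K \to L$ that is the identity on $\ab$ and collapses $F$ to $b$; the verification localizes to the mixed join/meet formulas $f \jj r = b \jj r$, $f \mm r = a \mm r$ and to the identities $a \mm r = b \mm r$, $a \jj r = b \jj r$ for $r \in R$, which are exactly where isolation of $[a,b]$ enters, and then the standard argument of pushing an \textup{SD}${}_\jj$ instance through a homomorphism whose kernel has the single nontrivial block $F$ reduces everything to the case $u \in F$, settled by your case check using join-semidistributivity of $F$. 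Your duality reduction is also sound: isolated intervals, FN lattices (simple, infinite, semidistributive), and the construction of $K$ are all self-dual, so meet-semidistributivity follows from the join half applied to $L^\partial$ and $F^\partial$. What your route buys: it is rigorous precisely where the paper waves its hands (the mixed case), and $\beta$ makes exact the intuition that $K$ is $L$ with the isolated interval replaced by $F$; its kernel could even be reused toward Lemma~\ref{L:cong}. What the paper's route buys: brevity, and a symmetric treatment of both halves of semidistributivity directly from Lemma~\ref{L:prime}. One minor remark: with the choice $\beta(f) = b$ only the identity $a \mm r = b \mm r$ is actually needed (the mixed join gives $b \jj r$ on both sides already); the companion identity $a \jj r = b \jj r$ would be needed had you set $\beta(f) = a$.
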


\begin{proof}
We utilize Lemma~\ref{L:prime}. So for $u < v \in K$, we have to verify that
the set 
\[
   I_{u,v} = \setm{x \in K}{v \mm x = u}
\]
is an ideal in $\fil{u}$. 

If $u,v \in \ab$, then the statement is true by Lemma~\ref{L:prime}
because $L$ and $\ab$ are semidistributive.
Thus if join-semidistibutivity fails, then $u$ or $v$---or both---are in $F$.
Both cannot be in $F$, because it is semidistributive.
But clearly there can be no failure with only one of $u$ and $v$ in $F$;
view Figure~\ref{F:partition}.

The meet-semidistributivity of $K$ is shown dually.
\end{proof}

Finally, let $\bga \in \Con L$. 
We associate with $\bga$ the binary relation $\bgaab$ on $\ab$
defined as follows. 

\begin{definition}\label{D:xx}
If $\cng a=b(\bga)$, then the congruence classes of $\bgaab$ on $\ab$
are the same as the congruence classes of $\bga$ on $L$
not containing $a$ (or $b$) and $F$ 
is one more congruence class.      

If $\ncng a=b(\bga)$, then the congruence classes of $\bgaab$ on $\ab$
are the same as the congruence classes of $\bga$ on $L$
not containing $a$ or $b$) and and every element of~$F$ 
is a trivial congruence class. 
\end{definition}

\begin{lemma}\label{L:cong}
The map $\bga \to \bgaab$ establishes an isomorphism between $D = \Con L$
and $\Con \ab$.
\end{lemma}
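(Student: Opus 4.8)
The plan is to prove that $\bga \mapsto \bgaab$ is an order-isomorphism between $\Con L$ and $\Con \ab$ by exhibiting an explicit inverse and checking that both directions preserve inclusion; a bijection between congruence lattices that is monotone in both directions is automatically a lattice isomorphism. The one structural input that makes the correspondence rigid is the simplicity of $F$: since $F$ is a simple, convex sublattice, the restriction to $F$ of any congruence is either the identity or the all-relation, with nothing in between. This two-valued invariant is exactly the alternative ``$\cng a=b(\bga)$ versus $\ncng a=b(\bga)$'' used in Definition~\ref{D:xx}, and matching these two dichotomies is the engine of the proof.

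First I would check that $\bgaab$ is a congruence, so that the map is well defined into $\Con \ab$. Blocks that lie entirely inside one of the regions $P$, $Q$, $R$ of the partition~\eqref{Eq:partition} are unchanged $\bga$-blocks of $\ab$, on which the orders of $L$ and of $K$ agree, so the substitution property there is inherited from $\bga \in \Con L$. The only products that must be re-examined are those coupling $F$ to $\ab$, and by Definition~\ref{D:bin} these are controlled entirely by the socket $a \prec b$: every element of $P$ lies below all of $F$, all of $F$ lies below each element of $Q$, $F$ is incomparable to $R$, and the relevant joins and meets are computed in $L$ through $b$ and $a$, as in the sample computation in the proof of Lemma~\ref{L:lattice}. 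A short case analysis on $P,Q,R,F$ then shows that when $\cng a=b(\bga)$ the forced collapse of $F$ is consistent, the block replacing $F$ absorbing exactly the remnant of the $\bga$-block through $a$ and $b$, and that when $\ncng a=b(\bga)$ leaving $F$ undivided is consistent.

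It remains to invert the map and to verify monotonicity. Given $\theta \in \Con \ab$, restrict $\theta$ to $F$; by the simplicity of $F$ this restriction is either trivial or full. If it is full, set $\cng a=b(\bga)$ and recover $\bga$ from $\theta$ by deleting $F$ from its unique $F$-block and reattaching $a$ and $b$; if it is trivial, set $\ncng a=b(\bga)$ and reinsert $a$ and $b$ along the chain $a_* \prec a \prec b \prec b^*$ they occupied in $L$. Using that $a$ and $b$ are doubly-irreducible, one checks that the resulting $\bga$ is a congruence of $L$ and that the two assignments are mutually inverse, so the map is a bijection; monotonicity in both directions is then immediate, since each assignment carries blocks to blocks and sends ``$a$ collapsed to $b$'' to ``$F$ collapsed,'' a relation visibly preserved by inclusion. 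The main obstacle is the well-definedness of the inverse: one must show that the reattachment of $a$ and $b$ dictated by $\theta$ is the only one compatible with the covers $a_* \prec a \prec b \prec b^*$, so that $\bga$ is reconstructed uniquely and correctly. Once this socket bookkeeping is settled, together with the dichotomy supplied by the simplicity of $F$, the isomorphism follows.
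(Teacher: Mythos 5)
The paper states Lemma~\ref{L:cong} with no proof at all (it is followed immediately by ``Lemmas~\ref{L:lattice}--\ref{L:cong} complete the proof of Theorem~\ref{T:main2}''), so there is no argument of the authors to compare yours against; your proposal must be judged on its own. Half of it is sound. Your reading that $\bgaab$ lives on $K$ (so the target of the map should really be $\Con K$, not $\Con \ab$) is clearly the intended one, and your repair of Definition~\ref{D:xx} in the case $\cng a=b(\bga)$ --- letting the $F$-class absorb the remnant of the class containing $\set{a,b}$, which the literal definition leaves unassigned --- is correct and necessary. That half of the correspondence works: since $[a,b]$ is isolated, $\con{a,b}$ has $\set{a,b}$ as its only nontrivial class, the relation collapsing exactly $F$ is a congruence $\bgm$ of $K$ with $K/\bgm \iso L/\con{a,b}$, and so the congruences of $K$ restricting fully to the simple sublattice $F$ correspond bijectively and monotonically to the congruences of $L$ collapsing $a$ and $b$.

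The gap is your sentence ``one checks that the resulting $\bga$ is a congruence of $L$ and that the two assignments are mutually inverse.'' In the case $\ncng a=b(\bga)$ this fails whenever the $\bga$-class of $a$ or of $b$ is not a singleton: deleting $a$ and $b$ destroys that class, while your inverse reinserts $a$ and $b$ only as singleton classes, so the composite cannot recover such a $\bga$. No amount of ``socket bookkeeping'' settles this, because the statement is false at this level of generality. Take $L$ to be the $4$-element chain $0 \prec a \prec b \prec 1$ --- the paper's own next example of an isolated interval. Then $\con{a_*,a} = \con{0,a}$ satisfies $\ncng a=b$ and is sent to $\zero_K$, exactly as $\zero_L$ is, so the map is not injective; indeed no bijection can exist, since $\Con L \iso \SB 3$ has eight elements while $K = F^+_+$ has exactly five congruences. (If $\theta \in \Con K$ restricts trivially to $F$ and $0 \equiv f \pmod{\theta}$ for some $f \in F$, then $g = g \jj 0 \equiv g \jj f \pmod{\theta}$ with both sides in $F$, forcing $g \geq f$ for every $g \in F$ --- impossible, as an FN lattice has no least element; hence $\theta = \zero_K$. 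The congruences collapsing $F$ form the filter $\fil{\bgm} \iso \Con(K/\bgm)$, and $K/\bgm$ is the $3$-chain, giving four; so $\Con K \iso (\SB 2)_+$.) Your argument --- and the lemma --- become correct precisely under the additional hypothesis, tacit in Definition~\ref{D:xx}, that $a/\bga = \set{a}$ and $b/\bga = \set{b}$ for every $\bga$ with $\ncng a=b(\bga)$, equivalently that $\con{a_*,a}$ and $\con{b,b^*}$ both collapse $\set{a,b}$. That holds in $\SN 5$, so Corollary~\ref{C:main2-1} survives, but it does not follow from $[a,b]$ being isolated.
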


Lemmas~\ref{L:lattice}--\ref{L:cong} complete the proof of  Theorem~\ref{T:main2}.

\end{document}